\theoremstyle{plain}
\newtheorem{Prop}{Proposition}[section]
\newtheorem{Thm}[Prop]{Theorem}
\newtheorem{Lem}[Prop]{Lemma}
\theoremstyle{definition}
\newtheorem{Def}[Prop]{Definition}
\theoremstyle{remark}
\def\int{\mathop{\roman{int}}}
\def\1{^{-1}}
\def\asdim{\text{asdim}}
\def\T2{{\mathbf T_2}}
\def\UU{{\mathcal U}}
\def\VV{{\mathcal V}}
\numberwithin{equation}{section}
\begin{document}
\title[On asymptotic dimension and a property of Nagata]{On asymptotic dimension and a property of Nagata}

\author{J.~Higes}
\address{Departamento de Geometr\'{\i}a y Topolog\'{\i}a,
Facultad de CC.Matem\'aticas. Universidad Complutense de Madrid.
Madrid, 28040 Spain} \email{josemhiges@yahoo.es}

\author{A.~Mitra}
\address{University of South Florida, St. Petersburg, FL 33701, USA}
\email{atish@stpt.usf.edu}

\date{ December 7, 2008}

\subjclass{ Primary: 54F45, 54C55, Secondary: 54E35, 18B30, 20H15}

\thanks{The first named author is supported by grant 2004-2494 from Ministerio Educacion y Ciencia, Spain and project MEC, MTM2006-0825. He thanks University of Tennessee for hospitality. Special thanks to J. Dydak and N. Brodskyi for helpful suggestions.}

\begin{abstract}
In this note we prove that every  metric space $(X, d)$ of asymptotic dimmension at most $n$ is coarsely equivalent to a metric space $(Y, D)$ that satisfies the following property of Nagata:\par
{\it For every $x,y_1,\cdots, y_{n+2} \in Y$ there exist $i,j\in \{1, \cdots, n+2\}$ with $i \ne  j$, such that $D(y_i,y_j)\le D(x,y_i)$.}\par
This solves problem $1400$ of \cite{Lviv}.
\end{abstract}

\maketitle

\tableofcontents
\section{Introduction}
Nagata introduced two properties to characterize metric spaces with finite topological dimension (see  \cite{Nagata}, \cite{Nagata2} and \cite{Nagata3}). Such properties are generalizations to higher dimensions of the notion of ultrametric space. The definition of the properties are the following ones:

\begin{Def} A metric space $(X, d)$ is said to satisfy the property $(N1)_n$  if for every $r >0$ and every $x, y_1,\cdots, y_{n+2}\in X$ such that $d(y_i, B(x, r))< 2\cdot r$ then there exists $i, j \in \{1, \cdots, n+2\}$ with $i \ne j$ such that $d(y_i, y_j) < 2\cdot r$.  
\end{Def} 

\begin{Def}
A metric space $(X, d)$ is said to satisfy property $(N2)_n$  if for every $x,y_1,\cdots, y_{n+2} \in X$ there exists $i,j\in \{1, \cdots, n+2\}$ with $i \ne  j$, such that $d(y_i,y_j)\le d(x,y_i)$.
\end{Def}

In \cite{Dran-Zar} Dranishnikov and Zarichnyi showed that every proper metric space $(X, d)$ of asymptotic dimension at most $n$ is coarsely equivalent to a proper metric space that satisfies $(N1)_n$. So a natural question is if the same statement is true  for the second property. Such problem appeared in \cite{Lviv} as problem $1400$. In this paper we solve it for general metric spaces using a technique of \cite{Brod-Dydak-Higes-Mitra}.

\section{Main theorem}
The notion of asymptotic dimension was introduced by Gromov in \cite{Gro asym invar} and it has been the focus of intense research in recent years. To give a definition we need to recall that a family of subsets $\UU$ of a metric space $(X, d)$ is said to be {\it $r$-disjoint} if for every two diferent $U \in \UU$ and $V \in \UU$ then $d(U, V) >r$.  
\begin{Def} A metric space $(X, d)$ is said to be of asymptotic dimension at most $n$ ($asdim (X, d) \le n$) if for every $r> 0$ there exists a uniformly bounded covering $\UU$ of subsets of $X$ such that $\UU$ splits in a union of the form $\UU = \bigcup_{i =1}^{n+1} \UU^i$ where each $\UU^i$ is an $r$-disjoint family of subsets. 
\end{Def} 

To simplify we will say that a covering $\UU$ of a metric space $(X, d)$  has {\it Lebesgue number} at least  $r$ ($L(\UU) \ge r$) if for every $x\in X$ the ball $B(x, r)$ is contained in one set of the covering. Next lemma characterizes asymptotic dimension in a nice way.  

\begin{Lem} \label{CoveringWithLebesgue} A metric space $(X, d)$  is of asymptotic dimension at most $n$ if and only if for every $r >0$ there exists a uniformly bounded covering $\UU$ where $\UU = \bigcup_{i= 1}^{n+1}\UU^i$ such that each $\UU^i$ is an $r$-disjoint family and  the Lebesgue number of $\UU$ is at least $r$.
\end{Lem}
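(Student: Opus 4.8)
The ``if'' direction requires no work: a covering $\UU = \bigcup_{i=1}^{n+1}\UU^i$ that is uniformly bounded with each $\UU^i$ an $r$-disjoint family is, by definition, exactly what is needed to witness $\asdim(X,d)\le n$, so the extra Lebesgue hypothesis is simply discarded. The content of the lemma is therefore the converse, where one must upgrade an arbitrary asymptotic-dimension cover to one that additionally has a positive Lebesgue number. The plan is to do this by thickening: start one scale larger than needed and enlarge every member of the cover by an $r$-neighborhood, which forces balls of radius $r$ to sit inside members while only mildly degrading the disjointness constants.

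Concretely, fix $r>0$. Applying the definition of $\asdim(X,d)\le n$ at scale $3r$, I obtain a uniformly bounded covering $\VV = \bigcup_{i=1}^{n+1}\VV^i$ in which each family $\VV^i$ is $3r$-disjoint. For each $V$ let $N_r(V) = \{x\in X : d(x,V) < r\}$ be its open $r$-neighborhood, and set $\UU^i = \{N_r(V) : V\in\VV^i\}$ together with $\UU = \bigcup_{i=1}^{n+1}\UU^i$. This $\UU$ is the desired covering.

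It remains to check the three properties. For disjointness, if $V,V'\in\VV^i$ are distinct then for any $x\in N_r(V)$ and $x'\in N_r(V')$ one has $d(x,x') \ge d(V,V') - d(x,V) - d(x',V') > d(V,V') - 2r$, so $d(N_r(V),N_r(V'))\ge d(V,V')-2r > 3r - 2r = r$; hence each $\UU^i$ is $r$-disjoint. For the Lebesgue number, given $x\in X$ choose $V\in\VV$ with $x\in V$; every $y$ with $d(x,y)<r$ satisfies $d(y,V)\le d(x,y)<r$, so $B(x,r)\subseteq N_r(V)\in\UU$, proving $L(\UU)\ge r$. Finally, if $\diam(V)\le D$ for all $V\in\VV$, then $\diam(N_r(V))\le D+2r$, so $\UU$ is uniformly bounded.

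The argument is essentially bookkeeping once the thickening idea is in place; the only point that needs care is the separation constant, and this is precisely why the definition is invoked at scale $3r$ rather than at scale $r$. Thickening by $r$ erodes the gap between two members of a common family by at most $2r$, so starting from a $3r$-disjoint family leaves an $r$-disjoint one, while the thickening by exactly $r$ is what manufactures the Lebesgue number $r$.
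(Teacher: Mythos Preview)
Your proof is correct and follows essentially the same approach as the paper: invoke the definition at scale $3r$, thicken each member by an open $r$-neighborhood, and observe that the resulting families are $r$-disjoint and uniformly bounded. Your write-up is in fact more thorough than the paper's, since you explicitly verify the Lebesgue-number condition that the paper leaves implicit.
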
 
\begin{proof}
Only one implication is not trivial. Suppose $asdim(X, d) \le n$ and let $r$ be a positive number. As $asdim(X, d) \le n$ there exists a uniformly bounded covering $\VV' =\bigcup_{i=1}^{n+1}\VV^i$ such that each $\VV^i$ is a $3\cdot r$-disjoint family. Let $\UU^i$ be the family of subsets given by  $\UU^i = \{N(V, r)| V \in \VV^i\}$ where $N(V, r) = \{x | d(x, V) < r\}$. Now if $U, U' \in \UU^i$ are different elements of $\UU^i$ then clearly $d(U, U') > r$. Therefore the family $\UU = \bigcup_{i=1}^{n+1} \UU^i$ is a covering of mesh at most $mesh(\VV) + 2\cdot r$ so it satisfies the conditions of the lemma.   
\end{proof}

Next proposition is a version of proposition 3.6. of \cite{Dran-Zar}. We give a proof to make the paper self-contained.
\begin{Prop}\label{SequenceOfCoverings}
Let $(X, d)$ be a metric space  such that $asdim (X, d) \le n$. Then there exist a sequence of uniformly bounded coverings $\{\UU_k\}_{k =1}^{\infty}$ and an increasing sequences of numbers $\{d_k\}_{k =1}^{\infty}$ such that:
\begin{enumerate}
\item For every $k$  $\UU_k = \bigcup_{i = 1}^{n+1} \UU_k^i$ and each $\UU_k^i$ is a $d_k$-disjoint family. 
\item $L(\UU_k) \ge d_k$.
\item $d_{k+1} > 2 \cdot m_k$ where $m_k = mesh (\UU_k)$. 
\item  For every $i, k, l$ with $k <l$ and every $U \in \UU^i_k , V \in \UU^i_l$ , if $U \cap V \ne \emptyset$ , then $U \subset V$.
\end{enumerate}
\end{Prop}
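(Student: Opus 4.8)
The plan is to build the coverings $\UU_k$ one at a time by induction on $k$, at each stage applying Lemma \ref{CoveringWithLebesgue} at a carefully inflated scale and then \emph{enlarging} the resulting sets so that each one swallows every set of the previously constructed coverings that it meets. Conditions (1)--(3) will amount to bookkeeping about the choice of scales, while (4) is where the genuine work lies; that coherent nesting condition is the main obstacle, and the enlargement step together with the inductive hypothesis is designed precisely to deliver it.

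For the base step I set $d_1=1$ and let $\UU_1$ be any covering supplied by Lemma \ref{CoveringWithLebesgue} at scale $d_1$, so (1),(2) hold and (4) is vacuous. Assume $\UU_1,\dots,\UU_{l-1}$ have been constructed satisfying (1),(2) at their scales and (4) for all pairs of indices below $l$. Put $M_{l-1}=\max\{m_1,\dots,m_{l-1}\}$ (finite by uniform boundedness), choose $d_l>\max\{d_{l-1},\,2M_{l-1}\}$ — this already forces (3) and keeps $\{d_k\}$ increasing — and apply Lemma \ref{CoveringWithLebesgue} at the larger scale $s_l:=d_l+3M_{l-1}$ to obtain $\WW=\bigcup_i\WW^i$ with each $\WW^i$ $s_l$-disjoint and $L(\WW)\ge s_l$. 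I then define, for each $W\in\WW^i$,
\[
V_W = W \cup \bigcup\{\,U : U\in\UU_k^i \text{ for some } k<l,\ U\cap W\neq\emptyset\,\},
\]
and set $\UU_l^i=\{V_W : W\in\WW^i\}$, $\UU_l=\bigcup_i\UU_l^i$.

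The verification of (1),(2) for $\UU_l$ is routine. Since every added $U$ has $\diam U\le M_{l-1}$ and meets $W$, the set $V_W$ lies within distance $M_{l-1}$ of $W$; hence $\mesh(\UU_l)\le \mesh(\WW)+2M_{l-1}<\infty$ and, because $W\subseteq V_W$ while $L(\WW)\ge s_l>d_l$, we get $L(\UU_l)\ge d_l$. For $d_l$-disjointness, distinct $W,W'\in\WW^i$ satisfy $d(W,W')>s_l=d_l+3M_{l-1}$, and inflating each by at most $M_{l-1}$ still leaves a gap exceeding $d_l$; this is exactly why the factor $3$ was built into $s_l$. (The same diameter estimate shows each small $U$ can meet at most one $W\in\WW^i$, so the construction is consistent.)

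The heart of the matter is (4): given $U\in\UU_k^i$ with $k<l$ and $U\cap V_W\neq\emptyset$, I must show $U\subseteq V_W$. If $U\cap W\neq\emptyset$ then $U$ is one of the sets added to $V_W$, so $U\subseteq V_W$. Otherwise $U\cap W=\emptyset$, so $U$ meets some added $U'\in\UU_{k'}^i$ with $k'<l$ and $U'\cap W\neq\emptyset$. Here I would invoke the inductive form of (4) among the earlier coverings. The case $k=k'$ is impossible, since $d_k$-disjointness of $\UU_k^i$ would force $U=U'$, which meets $W$. If $k<k'$, then (4) for the pair $(k,k')$ gives $U\subseteq U'\subseteq V_W$. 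If $k>k'$, then (4) for the pair $(k',k)$ gives $U'\subseteq U$, whence $U$ meets $W$, contradicting $U\cap W=\emptyset$. Thus $U\subseteq V_W$ in every admissible case, which closes the induction and completes the construction.
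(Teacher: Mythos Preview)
Your proof is correct and follows essentially the same inductive strategy as the paper: apply Lemma~\ref{CoveringWithLebesgue} at an inflated scale, then enlarge each resulting set by adjoining all sets from earlier levels of the same color that it meets, and verify (1)--(4). Your treatment is in fact slightly more careful (using $M_{l-1}=\max_k m_k$ rather than just $m_{l-1}$, and giving an explicit case split for (4)), but the construction and the key idea are the same.
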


\begin{proof}
Let us construct the sequence by induction on $k$. For $k = 1$ the result is just an application of lemma \ref{CoveringWithLebesgue} for $d_1 = 1$. Suppose we have a finite sequence of uniformly bounded coverings $\{\UU_k\}_{k=1}^t$ and a finite sequence of numbers $\{d_k\}_{k = 1}^t$ that satisfy properties (1)-(4). Let $d_{t+1} >0$ be a positive number such that $d_{t+1} > 2\cdot m_{t}$ and define $D_{t+1} = d_{t+1} + 2 \cdot m_{t}$. Hence by Lemma \ref{CoveringWithLebesgue} there exists an uniformly bounded covering $\VV_{t+1} = \bigcup_{i=1}^{n+1} \VV_{t+1}^i$ such that each $\VV_{t+1}^i$ is $D_{t+1}$-disjoint and $L(\VV_{t+1}) \ge D_{t+1}$. Now for every $i\in\{1,..., n+1\}$ we define the family of subsets $\UU_{t+1}^i = \{U_V| V \in \VV_{t+1}^i\}$ where $U_V$ is defined as the union of $V$ with all the subsets $W \in \UU_k^i$ with $k \in \{1, ..., t\}$ such that $W \cap V \ne \emptyset$. We claim that the covering given by $\UU_{t+1} = \bigcup_{i =1}^{n+1} \UU_{t+1}$ satisfies the required conditions. Clearly the mesh of $\UU_{t+1}$ is bounded by $2 \cdot m_t + mesh (\VV_{t+1})$. Also we have $L(\UU_{t+1})\ge L(\VV_{t+1})\ge D_{t+1} \ge d_{t+1}$. Let $i\in \{1,..., n+1\}$ be a fix number.  For every $U_V, U_{W}\in \UU_{t+1}^i$ we have $d(U_V, U_W) > d(V, W) - 2\cdot m_t = d_{t+1}$ so each $\UU_{t+1}^i$ is $d_{t+1}$-disjoint. The unique property that we need to check is (4). Let $k$, $l$ be two numbers such that  $k < l \le t+1$. If $l < t+1$ the fourth condition follows from the induction hypothesis. Let us suppose $l = t+1$. Let $W \in \UU_k^i$ and $U_V \in \UU_{t+1}^i$ such that $W \cap U_V \ne \emptyset$. This implies there exist an $s\le t$ and a $V' \in \UU_{s}^i$ such that $V' \cap V \ne \emptyset$ and 
$W \cap V' \ne \emptyset$. By the induction hypothesis  the last codition implies $W \subset V'$ or $V' \subset W$. In both cases we can conclude $W \subset U_V$. 

\end{proof}

 A map $f: (X, d_X) \to (Y, d_Y)$ between metric spaces is said to be a {\it coarse} map if for every $\delta >0$ there is an $\epsilon >0$ such that for every two points $a, b \in X$ that satisfy $d_X(a, b) \le \delta$ then $d_Y(f(a), f(b)) \le \epsilon$. If there exist also a coarse map $g: (Y, d_Y) \to (X, d_X)$ and a constant $C >0$ such that for every $x \in X$ and every $y \in Y$, $d_X(x, g(f(x))) \le C$ and $d_Y(y, f(g(y)))\le C$ then $f$ is said to be a {\it coarse equivalence} and the metric spaces $X$ and $Y$ are said to be {\it coarse equivalent}.

\begin{Thm} Every metric space $(X,d)$ with $\asdim X \le n$ is coarsely equivalent to a  metric space that satisfies property $(N2)_n$.
\end{Thm}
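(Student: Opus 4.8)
The plan is to leave the underlying set $X$ unchanged and to replace $d$ by a new, essentially integer-valued metric $D$ read off directly from the nested coverings supplied by Proposition \ref{SequenceOfCoverings}. Concretely, I would fix such a sequence $\{\UU_k\}$, $\{d_k\}$ (we may harmlessly assume the meshes $m_k = \mesh(\UU_k)$ are non-decreasing and that $d_k \to \infty$, the latter being forced by property (3)), and for $a \ne b$ define the \emph{separation level}
\[
n(a,b) = \min\{\,k : \text{some } U \in \UU_k \text{ contains both } a \text{ and } b\,\},
\]
with $n(a,a)=0$. This is finite because $L(\UU_k) \ge d_k \to \infty$ pushes $a,b$ into a common member of $\UU_k$ as soon as $d_k > d(a,b)$. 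I claim that $D := n$ is already the desired metric, so that no rescaling $f \circ n$ is needed.

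The first task is to check that $D$ is a genuine metric. Symmetry and positivity are immediate, so everything rests on an \emph{approximate ultrametric inequality}, which I would prove in the form
\[
n(a,c) \le \max\bigl(n(a,b),\,n(b,c)\bigr) + 1 .
\]
Writing $p$ for the right-hand maximum, membership in common covering elements gives $d(a,b) \le m_p$ and $d(b,c) \le m_p$, hence $d(a,c) \le 2m_p < d_{p+1} \le L(\UU_{p+1})$ by properties (3) and (2); so the ball $B(a,d_{p+1})$, which contains $c$, lies in a single member of $\UU_{p+1}$, giving $n(a,c) \le p+1$. The \emph{exact} triangle inequality $n(a,c) \le n(a,b) + n(b,c)$ then follows from the fact that both separation levels are at least $1$, so the additive defect $+1$ is absorbed by the smaller of the two levels. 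Coarse equivalence of $(X,d)$ and $(X,D)$ via the identity is routine: $d(a,b) \le \delta$ forces $n(a,b) \le \min\{k : d_k > \delta\}$, while $n(a,b) \le \epsilon$ forces $d(a,b) \le m_{\lfloor \epsilon \rfloor}$, so the identity is bornologous in both directions.

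It remains to verify $(N2)_n$, and this is where the colouring does the real work. Given $x, y_1, \dots, y_{n+2}$, for each $j$ put $k_j = n(x,y_j) = D(x,y_j)$ and pick a colour $c_j \in \{1,\dots,n+1\}$ with $x$ and $y_j$ lying in a common member of $\UU_{k_j}^{c_j}$. Since there are $n+2$ indices but only $n+1$ colours, two of them, say $a \ne b$ with $k_a \le k_b$, satisfy $c_a = c_b =: c$. Let $U \in \UU_{k_a}^{c}$ contain $x,y_a$ and $V \in \UU_{k_b}^{c}$ contain $x,y_b$. As $x \in U \cap V$, the nesting property (4) (or, when $k_a = k_b$, the $d_{k_a}$-disjointness of $\UU_{k_a}^{c}$) yields $U \subseteq V$, whence $y_a, y_b \in V \in \UU_{k_b}^{c}$ and therefore $D(y_a,y_b) = n(y_a,y_b) \le k_b = D(x,y_b)$; taking $i=b$, $j=a$ gives exactly $D(y_i,y_j) \le D(x,y_i)$. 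I expect the main obstacle to be conceptual rather than computational: the separation level is genuinely only a quasi-ultrametric (were it an honest ultrametric, $X$ would be $0$-dimensional), so one cannot route the argument through an ultrametric, yet the statement demands an exact triangle inequality and an exact Nagata inequality, not coarse ones. The point that makes everything fit is the refusal to snowflake $n$, since any rescaling $f\circ n$ reintroduces a multiplicative defect in the triangle inequality, whereas $n$ itself is a metric \emph{precisely because} separation levels start at $1$; keeping track of this, together with the monotone-mesh normalisation, is the only delicate bookkeeping.
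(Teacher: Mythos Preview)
Your proposal is correct and follows essentially the same route as the paper: the metric $D=n$ you define is exactly the paper's $D$, the approximate ultrametric inequality $n(a,c)\le\max\{n(a,b),n(b,c)\}+1$ is the same key step, and the $(N2)_n$ verification via pigeonholing the colours $c_j$ and then invoking nesting property~(4) mirrors the paper's argument. If anything you are slightly more careful, making explicit why the $+1$ defect still yields an honest triangle inequality and separately treating the equal-level case $k_a=k_b$ via disjointness of $\UU_{k_a}^{c}$.
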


\begin{proof}
Let $\{\UU_k\}_{k=1}^{\infty}$ be a sequence of coverings as in proposition \ref{SequenceOfCoverings}. For every $x$ and $y$ in $X$ we define:
\[D(x, y) = \min\{k| \text{ there exists } U \in \UU_k \text{ such that } x, y \in U\}\]
Notice that by the second and third properties of $\UU_k$ we can easily deduce that for every $x, y , z \in X$: 
\[D(x, y) \le \max\{D(x, z), D(y, z)\}+1\]
This implies $(X, D)$ is a metric space.
Moreover if $D(x,y) \le \delta$ and $k$ is the minimum natural number such that $\delta \le k$  then $d(x, y) \le m_k$. In a similar way let us asume $d(x, y) \le \delta$. Let $d_r$ be the minimum number of the sequence$ \{d_k\}_{k =1}^{\infty}$ such that $\delta \le d_r$. We have $L(\UU_r) > d_r$, this implies there exists an $U\in \UU_r$ such that $x \in U$ and $y\in U$ what means $D(x, y) \le r$. We have shown $(X, D)$ is coarsely equivalent to $(X, d)$.\par
Now let $y_1,\cdots, y_{n+2}$, $x\in X$ and define for every $i \in\{1, \cdots,n+2\}$ the number $r(i) = D(y_i, x)$. By definition of $D$ we get that for every $i$ there exists a $V_i \in \UU_{r(i)}$ such that $y_i \in V_i$ and $x\in V_i$. For every $i$ there exists $k(i)$ such that $V_i \in \UU_{r(i)}^{k(i)}$. By the pigeon principle there are two $i, j$ with $i\ne j$ such that $k(i) = k(j)$. As $x \in V_i \cap V_j$ by the fourth property of proposition \ref{SequenceOfCoverings} we conclude $V_i \subseteq V_j$ or $V_j\subseteq V_i$. The first case means $D(y_i, y_j) \le D(x, y_j)$ and the second one means $D(y_j, y_i) \le D(x, y_i)$. Therefore $(X, D)$ satisfies $(N2)_n$.    
\end{proof}


\begin{thebibliography}{99}

\bibitem{Lviv}
T. Banakh, B.Bokalo, I. Guran, T. Radul, M. Zarichnyi \emph{Problems from the Lviv topological seminar}, in Open problems in Topology II.(E. Pearl ed.) Elseiver, 2007. P. 655--667. 

\bibitem{Brod-Dydak-Higes-Mitra}
N.~Brodskiy, J.~Dydak, J.~Higes, A.~Mitra, \emph{Assouad-Nagata
dimension via Lipschitz extensions}, to appear in Israel Journal of Mathematics.

\bibitem{Dran-Zar}
A.~Dranishnikov, M.~Zarichnyi, \emph{Universal spaces for
asymptotic dimension}, Topology Appl. {\bf 140} (2004), no.2-3,
203--225.

\bibitem{Gro asym invar} M. Gromov, {\em Asymptotic invariants for infinite
groups}, in Geometric Group Theory, vol. 2, 1--295, G.Niblo and M.Roller,
eds., Cambridge University Press, 1993.

\bibitem{Nagata}
J. Nagata, \emph{Note on dimension theory for metric spaces},
Fund Math. {\bf 45} (1958), 143-–181.

\bibitem{Nagata2}
J. Nagata, \emph{On a special metric characterizing a metric space of $dim\le n$},
Proc. Japan Acad. {\bf 39} (1963), 278-–282.

\bibitem{Nagata3}
J. Nagata, \emph{On a special metric and dimension},
Fund Math. {\bf 55} (1964), 181-–194.

\end{thebibliography}
\end{document}